\documentclass{amsart}
\title[Fixed Point]{An addedum to the paper "Some elementary estimates for the Navier-Stokes system"}
\author{Jean Cortissoz}
\address{Departamento de Matem\'aticas,
Universidad de Los Andes\\
Bogot\'a DC, COLOMBIA}
\email{jcortiss@uniandes.edu.co}
\subjclass{35Q30}
\keywords{Navier-Stokes equations, Regularity}
\newtheorem{theorem}{Theorem}
\newtheorem{lemma}{Lemma}

\begin{document}
\begin{abstract}
In this paper we give a proof of the existence of global
regular solutions to the Fourier transformed Navier-Stokes system
with small initial data in $\Phi\left(2\right)$ via an iteration argument.
The proof of the regularity theorem is a minor modification of the proof given in the paper 
"Some elementary estimates for the Navier-Stokes system", so this paper is intended to
be just a complement to the afore mentioned paper.
\end{abstract}
\maketitle

\section{Introduction}

A Generalized Navier-Stokes system (with periodic boundary conditions
on $\left[0,1\right]^3$) is a system of the form
\begin{eqnarray}
\label{FourierNS}
v^k\left(\xi,t\right)&=&
\psi^k\left(\xi\right)\exp\left(-\left|\xi\right|^2 t\right)\\ \notag
&&
+\int_{0}^t\exp\left(-\left|\xi\right|^2\left(t-s\right)\right)\sum_{\mathbf{q}\in\mathbb{Z}^3}
M_{ijk}\left(\xi\right)v^i\left(q,s\right)v^j\left(\xi-q,s\right)\, ds,
\end{eqnarray}
for $\xi\in \mathbb{Z}^3$, and where $M_{ijk}\left(\xi\right)$ satisfies the bound 
\[
\left|M_{ijk}\left(\xi\right)\right|\leq \left|\xi\right|.
\]

To solve this problem it is usual to consider the following iteration scheme
\begin{eqnarray*}
v_{n+1}^k\left(\xi,t\right)&=&
\psi^k\left(\xi\right)\exp\left(-\left|\xi\right|^2 t\right)\\
&&
+\int_{0}^t\exp\left(-\left|\xi\right|^2\left(t-s\right)\right)\sum_{\mathbf{q}\in\mathbb{Z}^3}
M_{ijk}\left(\xi\right)v_n^i\left(q,s\right)v_n^j\left(\xi-q,s\right)\, ds.
\end{eqnarray*}
In what follows we will show the convergence of this method for small initial conditions on 
$\Phi\left(2\right)$ (for the definition of the space $\Phi\left(2\right)$ see \cite{Cortissoz}).
More exactly we will show that
\begin{theorem}
There exists an $\epsilon>0$ such that if $\left\|\psi\right\|_2<\epsilon$, then
(\ref{FourierNS}) has a global regular solution with initial condition $\psi$.
\end{theorem}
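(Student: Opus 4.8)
The plan is to recast (\ref{FourierNS}) as a fixed-point equation and to solve it by the Banach contraction principle in a space of time-dependent Fourier sequences. Writing the free evolution as $(L\psi)^k(\xi,t)=\psi^k(\xi)\exp(-|\xi|^2t)$ and the bilinear operator as
\[
B(u,v)^k(\xi,t)=\int_0^t\exp\left(-|\xi|^2(t-s)\right)\sum_{q\in\mathbb Z^3}M_{ijk}(\xi)\,u^i(q,s)\,v^j(\xi-q,s)\,ds ,
\]
the system becomes $v=L\psi+B(v,v)$ and the scheme becomes $v_{n+1}=L\psi+B(v_n,v_n)$. I would work in the Banach space $X$ of continuous maps $v:[0,\infty)\to\Phi(2)$ with $\|v\|_X=\sup_{t\ge0}\|v(\cdot,t)\|_2$, restricted to mean-zero sequences; the latter restriction is natural and is preserved by the scheme, since the bound $|M_{ijk}(0)|\le0$ forces $M_{ijk}(0)=0$, so the zero mode obeys $v^k(0,t)=\psi^k(0)$ and stays zero once $\psi^k(0)=0$. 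The goal is to show that for $\|\psi\|_2<\epsilon$ the map $Tv=L\psi+B(v,v)$ maps a small ball of $X$ into itself and is a contraction there.

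The estimates split into three parts. The linear bound is immediate from $\exp(-|\xi|^2t)\le1$, giving $\|L\psi\|_X\le\|\psi\|_2$. Next I would record the convolution-algebra property of $\Phi(2)$ on mean-zero sequences, $\||u|*|v|\|_2\le C\|u\|_2\|v\|_2$; here the discreteness of $\mathbb Z^3$ is essential, because nonzero modes satisfy $|\xi|\ge1$ and the weight splits as $|\xi|^2\le4|q|^2|\xi-q|^2$ whenever $q$ and $\xi-q$ are nonzero, so the weighted convolution norm is controlled by the product of the two weighted norms. The crux is then the bilinear smoothing estimate
\[
\|B(u,v)(\cdot,t)\|_2\le C\int_0^t(t-s)^{-1/2}e^{-(t-s)/2}\,\|u(\cdot,s)\|_2\,\|v(\cdot,s)\|_2\,ds\le C'\,\|u\|_X\,\|v\|_X .
\]
The bound $|M_{ijk}(\xi)|\le|\xi|$ costs one derivative, which I would recover from the heat factor through the elementary inequality $\sup_{|\xi|\ge1}|\xi|\exp(-|\xi|^2\tau)\le C\tau^{-1/2}e^{-\tau/2}$: this simultaneously produces the integrable singularity $(t-s)^{-1/2}$ (the missing half-derivative) and the exponential-in-time decay $e^{-(t-s)/2}$, after which the algebra estimate disposes of the remaining two powers of the frequency.

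I expect this bilinear estimate to be the main obstacle, and the delicate point is \emph{global} rather than merely local control in time. On $\mathbb R^3$ a derivative-losing nonlinearity measured in a subcritical norm like $\Phi(2)$ would only yield a time integral growing like $t^{1/2}$, and global existence would force one into a scaling-critical space. On the torus the lowest nonzero frequency is bounded away from zero, so the heat factor carries the genuine exponential decay $e^{-(t-s)/2}$ above; consequently $\int_0^\infty\tau^{-1/2}e^{-\tau/2}\,d\tau<\infty$ and the supremum-in-time norm closes for all $t$, which is exactly what upgrades the local argument to a global one. Granting these bounds, $Tv=L\psi+B(v,v)$ satisfies $\|Tv\|_X\le\|\psi\|_2+C'\|v\|_X^2$ and $\|Tv-Tw\|_X\le C'(\|v\|_X+\|w\|_X)\|v-w\|_X$, so choosing $R$ with $2C'R<1$ and then $\epsilon\le R-C'R^2$ makes $T$ a contraction of $\{\|v\|_X\le R\}$; its unique fixed point is the desired global mild solution. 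Finally, global regularity follows exactly as in \cite{Cortissoz}: the uniform $\Phi(2)$ bound fed back into the equation is bootstrapped, via the same smoothing, to control $\|v(\cdot,t)\|_s$ for every $s$ and every $t>0$, so $v$ is smooth; as this is only a minor modification of the regularity argument already given there, I would invoke it rather than repeat it.
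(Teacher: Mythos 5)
Your overall strategy (recasting the system as $v=L\psi+B(v,v)$ and running a contraction argument in $\sup_t\Phi(2)$) is a legitimate alternative to the paper's route, which instead proves a uniform bound and equicontinuity of the iterates and extracts the solution by Arzel\`a--Ascoli plus a diagonal argument. However, your key lemma is false as stated, and this is a genuine gap rather than a presentational one. The space $\Phi(2)$ with the norm $\left\|u\right\|_2=\sup_{\xi}\left|\xi\right|^2\left|u\left(\xi\right)\right|$ is \emph{not} a convolution algebra on $\mathbb{Z}^3$: taking $u\left(\xi\right)=\left|\xi\right|^{-2}$ one finds
\[
\left(\left|u\right|*\left|u\right|\right)\left(\xi\right)=\sum_{q}\frac{1}{\left|q\right|^2\left|\xi-q\right|^2}\;\sim\;\frac{c}{\left|\xi\right|},
\]
so that $\left|\xi\right|^2\left(\left|u\right|*\left|u\right|\right)\left(\xi\right)\sim c\left|\xi\right|$ is unbounded. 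Your proposed proof of the algebra property does not work either: after applying the pointwise bound $\left|\xi\right|^2\leq 4\left|q\right|^2\left|\xi-q\right|^2$ you are left with $\sum_{q}$ of a quantity bounded below by a constant, which diverges. (That splitting does prove an algebra property for the $\ell^1$-weighted norm $\sum_\xi\left|\xi\right|^2\left|u\left(\xi\right)\right|$, but that is not the norm of $\Phi(2)$.)

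The correct statement --- and the actual heart of the existence proof, which your proposal skips --- is that convolution loses exactly one power: $\left|\left(\left|u\right|*\left|v\right|\right)\left(\xi\right)\right|\leq c\left\|u\right\|_2\left\|v\right\|_2/\left|\xi\right|$. Proving this requires the decomposition into the regions $\left|q\right|\lesssim\left|\xi\right|$ with $\left|\xi-q\right|$ small, $\left|\xi-q\right|$ comparable to $\left|\xi\right|$, and $\left|q\right|>2\left|\xi\right|$, together with the counting inequalities $\sum_{1\leq\left|q\right|<r}\left|q\right|^{-2}\leq cr$ and $\sum_{\left|q\right|\geq r}\left|q\right|^{-4}\leq c/r$; this is precisely the content of the paper's Lemma 1 and estimate (\ref{sumbound}). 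With this corrected estimate the factor $\left|M_{ijk}\left(\xi\right)\right|\leq\left|\xi\right|$ yields a source term bounded uniformly in $\xi$ by $c\left\|u\right\|_2\left\|v\right\|_2$, and the Duhamel integral $\int_0^t e^{-\left|\xi\right|^2\left(t-s\right)}ds\leq\left|\xi\right|^{-2}$ restores both powers of $\left|\xi\right|$ directly --- no fractional smoothing factor $\left(t-s\right)^{-1/2}$ is needed, and the global-in-time issue you worry about disappears. If you substitute this estimate, your contraction scheme does close and even delivers uniqueness of the small solution, which the paper's compactness argument does not; but as written the bilinear estimate on which everything rests is unproved and its stated form is false.
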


The main purpose on writing this note is for it to serve as a complement to our
paper \cite{Cortissoz}, and to show that the free divergence condition, neither the fact of
considering Leray-Hopf weak solutions is an issue for the proofs presented in that paper.  
 
\section{Existence}

We start with two auxiliary results,
\begin{lemma}
There exists an $\epsilon>0$ such that if $\left\|\psi\right\|<\epsilon$ then
the sequence $v_n^k\left(\xi,t\right)$ is uniformly bounded on $\left[0,T\right]$ for $\xi$
fixed.
\end{lemma}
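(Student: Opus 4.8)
The plan is to deduce the claimed fixed-$\xi$ bound from a single scalar recursion for a global norm, rather than by tracking each frequency separately. Writing the $\Phi(2)$ norm in the form $\|f\|_2=\sup_{\xi\in\mathbb{Z}^3}(1+|\xi|)^2|f(\xi)|$ and setting $Q_n=\sup_{0\le t\le T}\|v_n(\cdot,t)\|_2$, I would prove by induction that $Q_n\le 2\epsilon$ for every $n$, provided $\|\psi\|_2<\epsilon$ with $\epsilon$ small. Such a bound gives exactly the assertion, since $|v_n^k(\xi,t)|\le(1+|\xi|)^{-2}\|v_n(\cdot,t)\|_2\le 2\epsilon(1+|\xi|)^{-2}$ is then a bound uniform in $n$ and in $t$ (indeed uniform in $T$, which is what makes the solution global) for each fixed $\xi$.

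The heart of the argument is the quadratic estimate $Q_{n+1}\le\|\psi\|_2+C\,Q_n^2$. The linear part is immediate: $|\psi^k(\xi)\exp(-|\xi|^2t)|\le|\psi^k(\xi)|$, so its $\Phi(2)$ norm is at most $\|\psi\|_2$. For the nonlinear part I would first absorb the lost derivative into the heat kernel, using $|M_{ijk}(\xi)|\le|\xi|$ together with
\[
|\xi|\int_0^t\exp\!\left(-|\xi|^2(t-s)\right)\,ds=\frac{1-\exp\left(-|\xi|^2t\right)}{|\xi|}\le\frac{1}{|\xi|},
\]
which holds for $\xi\neq0$, is uniform in $t$, and costs nothing at $\xi=0$ since $M_{ijk}(0)=0$. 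It then remains to bound
\[
(1+|\xi|)^2\,\frac{1}{|\xi|}\sum_{q}|f(q)|\,|g(\xi-q)|\le C\,\|f\|_2\,\|g\|_2,
\]
with $f,g$ ranging over the components of $v_n$. Inserting $|f(q)|\le\|f\|_2(1+|q|)^{-2}$ and the analogue for $g$ reduces this to the lattice estimate $\sum_{q}(1+|q|)^{-2}(1+|\xi-q|)^{-2}\le C(1+|\xi|)^{-1}$ on $\mathbb{Z}^3$; the surviving prefactor is then $C(1+|\xi|)/|\xi|\le 2C$ for $|\xi|\ge1$, which closes the estimate. Summing over the finitely many indices $i,j,k$ gives the recursion.

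With the recursion in hand the induction is routine: choosing $\epsilon$ so that $4C\epsilon\le1$, the base case holds since $Q_0\le\|\psi\|_2<\epsilon$ (with seed $v_0=\psi^k(\xi)\exp(-|\xi|^2t)$ or $v_0\equiv0$), and if $Q_n\le2\epsilon$ then $Q_{n+1}\le\epsilon+C(2\epsilon)^2=\epsilon(1+4C\epsilon)\le2\epsilon$. I expect the one genuinely analytic point to be the lattice estimate above: the whole scheme hinges on the balance between the single derivative lost through $M_{ijk}$, the single power $|\xi|^{-1}$ gained from the heat kernel, and the exact decay $C(1+|\xi|)^{-1}$ of the discrete self-convolution of $(1+|\cdot|)^{-2}$ on $\mathbb{Z}^3$. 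It is precisely the exponent $2$ in $\Phi(2)$ that makes these powers match, so verifying that lattice bound — splitting the sum into the regions near $q=0$, near $q=\xi$, and the bulk — is where the real work lies; the remainder is bookkeeping.
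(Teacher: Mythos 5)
Your argument is correct and is essentially the paper's: the paper runs the same induction on $n$, preserving the invariant $|v_n^k(\xi,t)|\le\epsilon|\xi|^{-2}$ exactly via the convex combination $\epsilon e^{-|\xi|^2t}+c\epsilon^2(1-e^{-|\xi|^2t})\le\epsilon$ rather than through a recursion $Q_{n+1}\le\|\psi\|_2+CQ_n^2$ closing at $2\epsilon$, which is only a cosmetic difference. The lattice bound you defer to the end, $\sum_q(1+|q|)^{-2}(1+|\xi-q|)^{-2}\le C(1+|\xi|)^{-1}$, is precisely what the paper establishes by splitting into the regions $|\xi-q|\le|\xi|/2$, $|\xi-q|>|\xi|/2$ (with $|q|\le2|\xi|$), and $|q|>2|\xi|$, using $\sum_{1\le|q|<r}|q|^{-2}\le cr$ and $\sum_{|q|\ge r}|q|^{-4}\le c/r$.
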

\begin{proof}
To proof this fact it is enough to show that if 
\[
\left|v_{n}^k\left(\xi,t\right)\right|\leq \frac{\epsilon}{\left|\xi\right|^2}
\]
then 
\begin{equation}
\label{sumbound}
\left|\sum_{\mathbf{q}\in\mathbb{Z}^3}
M_{ijk}\left(\xi\right)v_n^i\left(q,s\right)v_n^j\left(\xi-q,s\right)\right|\leq c\epsilon^2,
\end{equation}
where $c$ is a universal constant, 
because then we would have, for any $t\geq 0$ and $\epsilon>0$ small enough,
\begin{eqnarray*}
\left|v_{n+1}^k\left(\xi,t\right)\right|&\leq&
\left|\psi^{k}\left(\xi\right)\right|\exp\left(-\left|\xi\right|^2 t\right)+
c\int_0^t \exp\left(t-s\right)\epsilon^2 \,ds\\
&\leq& \frac{\epsilon}{\left|\xi\right|^2}\exp\left(-\right|\xi\left|^2 t\right)+
\frac{c\epsilon^2}{\left|\xi\right|^2}\left(1-\exp\left(-\left|\xi\right|^2 t\right)\right)\\
&\leq& \frac{\epsilon}{\left|\xi\right|^2}\exp\left(-\right|\xi\left|^2 t\right)+
\frac{\epsilon}{\left|\xi\right|^2}\left(1-\exp\left(-\left|\xi\right|^2 t\right)\right)=
\frac{\epsilon}{\left|\xi\right|^2}.
\end{eqnarray*}

We proceed to show the validity of (\ref{sumbound}). Write
\[
\sum_{\mathbf{q}\in\mathbb{Z}^3}
M_{ijk}\left(\xi\right)v_n^i\left(q,s\right)v_n^j\left(\xi-q,s\right)= I+II+III,
\]
where
\[
I=\sum_{1\leq\left|q\right|\leq 2\left|\xi\right|,1\leq\left|\xi-q\right|\leq \frac{\left|\xi\right|}{2}}
M_{ijk}\left(\xi\right)v_n^i\left(\xi,t\right)v_n^j\left(\xi-q,t\right),
\]
\[
II=\sum_{1\leq\left|q\right|\leq 2\left|\xi\right|,\left|\xi-q\right|>\frac{\left|\xi\right|}{2}}
M_{ijk}\left(\xi\right)v_n^i\left(\xi,t\right)v_n^j\left(\xi-q,t\right),
\]
and
\[
III=\sum_{\left|q\right|>2\left|\xi\right|}
M_{ijk}\left(\xi\right)v_n^i\left(\xi,t\right)v_n^j\left(\xi-q,t\right).
\]
To estimate $I$ observe that if $\left|\xi-q\right|\leq \frac{\left|\xi\right|}{2}$, then 
$\left|q\right|\geq\frac{\left|\xi\right|}{2}$. Therefore, using that
\[
\left|M_{ijk}\left(\xi\right)\right|\leq c\left|\xi\right|
\]
and the elementary inequality
\begin{equation}
\label{basicineq0}
\sum_{1\leq\left|q\right|<r}\frac{1}{\left|q\right|^2}\leq cr
\end{equation}
(where $c$ is a universal constant) we can bound as follows,
\begin{eqnarray*}
\left|I\right|&\leq& c\left|\xi\right|\frac{\epsilon^2}{\left|\xi\right|^2}
\sum_{1\leq\left|\xi-q\right|\leq \frac{\left|\xi\right|}{2}}\frac{1}{\left|\xi-q\right|^2}\\
&\leq& \frac{c\epsilon^2}{\left|\xi\right|}\frac{\left|\xi\right|}{2}=c\epsilon^2.
\end{eqnarray*}
$II$ can be estimated in the same way, so we also obtain
\[
\left|II\right|\leq c\epsilon^2.
\]
To estimate $III$, first notice that $\left|q\right|>2\left|\xi\right|$ implies that
$\left|\xi-q\right|\geq \frac{1}{2}\left|q\right|$. Hence, using the inequality
\begin{equation}
\label{basicineq}
\sum_{\left|q\right|\geq r}\frac{1}{\left|q\right|^4}\leq \frac{c}{r},
\end{equation}
we can bound as follows,
\begin{eqnarray*}
\left|III\right|&\leq& c\left|\xi\right|\epsilon^2\sum_{\left|q\right|>2\left|\xi\right|}
\frac{1}{\left|q\right|^2}\frac{1}{\left|\xi-q\right|^2}\\
&\leq&c\left|\xi\right|\epsilon^2\sum_{\left|q\right|>2\left|\xi\right|}\frac{1}{\left|q\right|^4}\\
&\leq& c\left|\xi\right|\epsilon^2\frac{1}{\left|\xi\right|}=c\epsilon^2.
\end{eqnarray*}

This shows the lemma.
\end{proof}

\begin{lemma}
If there is an $\epsilon>0$ such that the sequence $v_n^k\left(\xi,t\right)$ satisfies
\[
\left\|v_n^k\left(t\right)\right\|_2<\epsilon \quad\mbox{for all}\quad t\in\left[0,T\right]
\]
The sequence $v_n^k\left(\xi,t\right)$ is equicontinuous on $\left[0,T\right]$ for $\xi$ fixed.
\end{lemma}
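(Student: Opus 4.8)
The plan is to show that the family $\{v_n^k(\xi,\cdot)\}_n$ is \emph{uniformly Lipschitz} on $[0,T]$ for each fixed $\xi$; equicontinuity is then immediate, since a Lipschitz bound $L$ independent of $n$ gives $\left|v_n^k(\xi,t_2)-v_n^k(\xi,t_1)\right|\le L\left|t_2-t_1\right|$, so that for a prescribed $\eta>0$ the single choice $\delta=\eta/L$ works simultaneously for every member of the sequence.

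First I would record that, under the hypothesis $\left\|v_n^k\left(t\right)\right\|_2<\epsilon$, the pointwise estimate $\left|v_n^k(\xi,t)\right|\le c\epsilon/\left|\xi\right|^2$ holds (this is precisely the control the $\Phi(2)$ norm provides). Consequently the argument of Lemma 1 applies verbatim and yields, via (\ref{sumbound}), the uniform bound
\[
\left|\sum_{\mathbf{q}\in\mathbb{Z}^3} M_{ijk}(\xi)v_n^i(q,s)v_n^j(\xi-q,s)\right|\le c\epsilon^2
\]
for all $s\in[0,T]$, all $n$, and all $\xi$. Denote the left-hand side by $N_n(\xi,s)$.

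Next I would differentiate the defining relation of the scheme in $t$. Since $N_n(\xi,\cdot)$ is bounded and continuous (the latter by induction on $n$), differentiating under the integral sign gives
\[
\partial_t v_{n+1}^k(\xi,t)=-\left|\xi\right|^2 v_{n+1}^k(\xi,t)+N_n(\xi,t).
\]
Combining the uniform bound $\left|\xi\right|^2\left|v_{n+1}^k(\xi,t)\right|\le c\epsilon$ from Lemma 1 with $\left|N_n(\xi,t)\right|\le c\epsilon^2$, one obtains
\[
\left|\partial_t v_{n+1}^k(\xi,t)\right|\le c\epsilon+c\epsilon^2,
\]
a bound independent of both $n$ and $\xi$. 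Integrating between $t_1$ and $t_2$ produces the sought uniform Lipschitz estimate. The base case $v_0$ (the linear term $\psi^k(\xi)\exp(-\left|\xi\right|^2 t)$, or whatever the scheme is started from) is a single fixed smooth function of $t$ and hence trivially Lipschitz for $\xi$ fixed.

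The only step requiring genuine care is the first one: one must confirm that $\Phi(2)$-smallness really delivers the pointwise control $\left|v_n^k(\xi,t)\right|\le c\epsilon/\left|\xi\right|^2$ that feeds inequality (\ref{sumbound}). Once that link is made the rest is a routine application of the fundamental theorem of calculus. An alternative that sidesteps differentiation of $N_n$ is to split the difference $v_{n+1}^k(\xi,t_2)-v_{n+1}^k(\xi,t_1)$ into a boundary piece $\int_{t_1}^{t_2}$ and a bulk piece $\int_0^{t_1}$, bounding the former by $c\epsilon^2\left|t_2-t_1\right|$ and the latter by means of the elementary inequality $\left|e^{-\left|\xi\right|^2 h}-1\right|\le\left|\xi\right|^2 h$; this yields the same Lipschitz constant while assuming only boundedness of $N_n$.
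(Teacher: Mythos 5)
Your proposal is correct, but your primary route differs from the paper's. The paper estimates the difference $v_{n+1}^k(\xi,t_2)-v_{n+1}^k(\xi,t_1)$ directly, splitting it into the difference of the initial-data terms, a bulk integral over $[0,t_1]$ containing the difference of the two heat kernels, and a boundary integral over $[t_1,t_2]$; each piece is controlled by the elementary inequality $\left|1-e^{-|\xi|^2 h}\right|\le |\xi|^2 h$ together with the bound $c\epsilon^2$ on the nonlinear sum inherited from the first lemma, giving $\left|v_{n+1}^k(\xi,t_2)-v_{n+1}^k(\xi,t_1)\right|\le C(\epsilon)(t_2-t_1)$. You instead differentiate the Duhamel formula to get $\partial_t v_{n+1}^k=-|\xi|^2 v_{n+1}^k+N_n$ and bound the derivative by $\epsilon+c\epsilon^2$ uniformly in $n$ and $\xi$ (note that $|\xi|^2|v_{n+1}^k|\le\epsilon$ already follows directly from the standing $\Phi(2)$ hypothesis, so you need not route through Lemma 1 for that factor). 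Both arguments land on the same uniform Lipschitz constant; yours is slightly cleaner but requires the mild extra justification of differentiating under the integral sign (continuity of $N_n$ in $s$, or an appeal to absolute continuity with an a.e.\ derivative bound), whereas the paper's difference-quotient computation avoids any differentiability discussion. The ``alternative'' you sketch in your final sentence is in fact precisely the paper's proof, modulo the explicitly written initial-data term, so your write-up subsumes the published argument.
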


\begin{proof}

Let $t_1,t_2 \in \left(\rho,T\right)$, $t_2>t_1$.Then 
we estimate for $\xi$ fixed
\[
\left|v_{n+1}^k\left(\xi,t_2\right)-v_{n+1}^k\left(\xi,t_1\right)\right|\leq I+II+III
\]
where
\[
I=\left|\psi^k\left(\xi\right)\right|\left|\exp\left(-\left|\xi\right|^2 t_2\right)
-\exp\left(-\left|\xi\right|^2 t_1\right)\right|,
\]
\begin{eqnarray*}
II&=&\int_{0}^{t_1}\left|\exp\left(-\left|\xi\right|^2\left(t_2-s\right)\right)
-\exp\left(-\left|\xi\right|^2\left(t_1-s\right)\right)\right| \\
&&\sum_{\mathbf{q}\in\mathbb{Z}^3}
\left|M_{ijk}\left(\xi\right)v_n^i\left(q,s\right)v_n^j\left(\xi-q,s\right)\right|\, ds
\end{eqnarray*}
and
\[
III=\int_{t_1}^{t_2}\exp\left(-\left|\xi\right|^2\left(t-s\right)\right)\sum_{\mathbf{q}\in\mathbb{Z}^3}
\left|M_{ijk}\left(\xi\right)v_n^i\left(q,s\right)v_n^j\left(\xi-q,s\right)\right|\, ds.
\]
Let us bound each of the previous expressions,
\begin{eqnarray*}
I&=&\left|\exp\left(-\left|\xi\right|^2 t_1\right)\right|
\left|1-\exp\left(-\left|\xi\right|^2\left(t_2-t_1\right)\right)\right|\\
&\leq&\frac{\epsilon}{\left|\xi\right|^2}\left|\xi\right|^2\left|t_2-t_1\right|=\epsilon\left|t_2-t_1\right|,
\end{eqnarray*}
\begin{eqnarray*}
II&\leq&\int_{0}^{t_1}\left|\exp\left(-\left|\xi\right|^2\left(t_2-s\right)\right)
-\exp\left(-\left|\xi\right|^2\left(t_1-s\right)\right)\right|\epsilon^2\,ds\\
&=&\int_{\tau_n}^{t_1}\exp\left(-\left|\xi\right|^2\left(t_1-s\right)\right)
\left|1-\exp\left(-\left|\xi\right|^2\left(t_2-t_1\right)\right)\right|\epsilon^2\,ds\\
&\leq&\left|\xi\right|^2\left(t_2-t_1\right)\frac{1}{\left|\xi\right|^2}
\left|1-\exp\left(-\left|\xi\right|^2 t_1\right)\right|,
\end{eqnarray*}
\begin{eqnarray*}
III&\leq&\int_{t_1}^{t_2}\epsilon^2\exp\left(-\left|\xi\right|^2\left(t_2-s\right)\right)\,ds\\
&\leq&\frac{\epsilon^2}{\left|\xi\right|^2}\left|1-\exp\left(-\left|\xi\right|^2\left(t_2-t_1\right)\right)\right|
\leq\frac{1}{\left|\xi\right|^2}
\epsilon^2\left|\xi\right|^2\left|t_2-t_1\right|,
\end{eqnarray*}
and hence
\[
\left|v^k_{n+1}\left(\xi,t_2\right)-v^k_{n+1}\left(\xi,t_1\right)\right|<C\left(\epsilon\right)\left(t_2-t_1\right)
\]
for $n\geq 0$, and the lemma is proved.
\end{proof}

The previous Lemmas via the theorem of Arzela-Ascoli, using Cantor's diagonal procedure, show
that there is a well defined $v\in \Phi\left(2\right)$ defined on 
$\left[0,T\right]$ such that,
\begin{eqnarray}
\label{fundamentalequation}
v^k\left(\xi,t\right)&=&
v^k\left(\xi,t\right)\exp\left(-\left|\xi\right|^2 t\right)\\\notag
&&
+\int_{0}^t\exp\left(-\left|\xi\right|^2\left(t-s\right)\right)\sum_{\mathbf{q}\in\mathbb{Z}^3}
M_{ijk}\left(\xi\right)v^i\left(q,s\right)v^j\left(\xi-q,s\right)\, ds
\end{eqnarray}
Let us give a proof of this. To simplify notation, let us assume that the sequence converging uniformly
on $\left[0,T\right]$ for each $\xi$ is the sequence $v_n\left(\xi,t\right)$. By what we have shown, there exists
a $D$ not depending on $t$, $\xi$ or $n$ such that 
\[
\left|v_n^j\left(\xi,t\right)\right|\leq \frac{D}{\left|\xi\right|^2}.
\]
Let $\xi$ be fixed, and let $\eta>0$ arbitrary. the previous estimate allows us to choose a $Q$ such that 
\[
\left|\sum_{\left|q\right|\geq Q}M_{ijk}\left(\xi\right)v_n^i\left(\xi,t\right)v_n^j\left(\xi,t\right)\right|
\leq \eta.
\]
and also that the same inequality is valid with $v_n$ replaced by $v$ (this can be done since the
choice of $Q$ only depends on $D$).
Hence we have
\begin{eqnarray*}
\left|v_{n+1}^k\left(\xi,t\right)\right.&-&\psi^k\left(\xi\right)\exp\left(-\left|\xi\right|^2 t\right)\\
&-&\left.\int_{0}^t
\exp\left(-\left|\xi\right|^2\left(t-s\right)\right)\sum_{1\leq\left|q\right|<Q}
M_{ijk}\left(\xi\right)v_n^i\left(q,s\right)v_n^j\left(\xi-q,s\right)\, ds\right|\\
&\leq& \eta
\end{eqnarray*}
Taking $n\rightarrow \infty$, we obtain
\begin{eqnarray*}
\left|v^k\left(\xi,t\right)\right.&-&\psi^k\left(\xi\right)\exp\left(-\left|\xi\right|^2 t\right)\\
&-&\left.\int_{0}^t
\exp\left(-\left|\xi\right|^2\left(t-s\right)\right)\sum_{1\leq\left|q\right|<Q}
M_{ijk}\left(\xi\right)v^i\left(q,s\right)v^j\left(\xi-q,s\right)\, ds\right|\\
&\leq& \eta
\end{eqnarray*}
and from this follows that
\begin{eqnarray*}
\left|v^k\left(\xi,t\right)\right.&-&\psi^k\left(\xi\right)\exp\left(-\left|\xi\right|^2 t\right)\\
&-&\left.\int_{0}^t
\exp\left(-\left|\xi\right|^2\left(t-s\right)\right)\sum_{q\in\mathbb{Z}^3}
M_{ijk}\left(\xi\right)v^i\left(q,s\right)v^j\left(\xi-q,s\right)\, ds\right|\\
&\leq& 2\eta.
\end{eqnarray*}
Since $\eta>0$ is arbitrary, our claim is proved. 

\section{Regularity}

We shall show now that the solutions produced by the iteration scheme are regular under
certain smallness condition. Indeed, we have
\begin{theorem}
\label{regularity}
Let $v\in L^{\infty}\left(0,T;\Phi\left(2\right)\right)$ be a solution to (\ref{FourierNS}). There
exists an $\epsilon>0$ such that if there is a $k_{-1}$ for which $v$ satisfies
\begin{equation}
\sup_{\left|\xi\right|\geq k_{-1}}\left|\xi\right|^2\left|v^k\left(\xi,t\right)\right|<\epsilon
\quad\mbox{for all} \quad t\in\left(0,T\right)
\end{equation}
then $v$ is smooth.
\end{theorem}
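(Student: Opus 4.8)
The plan is to deduce smoothness of $v\left(\cdot,t\right)$ from rapid decay of its Fourier coefficients: it suffices to show that for every integer $m\ge 0$ there is a frequency cut-off $K_m$ and a constant $N_m$, possibly depending on $t$ but locally bounded on $\left(0,T\right)$, such that
\[
\left|v^k\left(\xi,t\right)\right|\le \frac{N_m}{\left|\xi\right|^{2+m}}\quad\mbox{for}\quad \left|\xi\right|\ge K_m.
\]
The base case $m=0$ is the hypothesis (with $N_0<\epsilon$, $K_0=k_{-1}$), together with the global bound $\left|v^k\left(\xi,t\right)\right|\le M/\left|\xi\right|^2$ coming from $v\in L^\infty\left(0,T;\Phi\left(2\right)\right)$. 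I would then run an induction on $m$ that gains one power of $\left|\xi\right|$ at each step, using the integral equation (\ref{FourierNS}): the heat factor contributes a gain of $\left|\xi\right|^{-2}$ after integration in time, while the linear term $\psi^k\left(\xi\right)\exp\left(-\left|\xi\right|^2 t\right)$ decays faster than any power for $t>0$ and is therefore harmless (its constants blow up as $t\to 0$, which only reflects that smoothness is asserted for positive times).

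The heart of the matter is a bilinear estimate for the convolution sum, obtained by splitting $q$ exactly as in the proof of Lemma 1 and applying the elementary inequalities (\ref{basicineq0}) and (\ref{basicineq}). I would separate the interactions into two types. In the first, one of the two factors carries a \emph{low} frequency, say $\left|q\right|<k_{-1}$; then that factor is summed over a \emph{finite} range and estimated by the global bound $M/\left|q\right|^2$, while the companion factor $v\left(\xi-q\right)$ sits at frequency $\ge\left|\xi\right|/2$ and receives the inductive bound $N_m\left|\xi\right|^{-(2+m)}$. Such terms produce no growing sum and, after the $\left|\xi\right|^{-2}$ heat gain, decay like $\left|\xi\right|^{-(3+m)}$; this is the genuine gain of one derivative. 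In the second type, both factors are high, and here lies the main obstacle: because $\Phi\left(2\right)$ is \emph{critical} for this bilinear operator, the convolution of two factors at the critical rate $\left|\cdot\right|^{-2}$ is scale invariant and is exactly cancelled by the bound $\left|M_{ijk}\right|\le\left|\xi\right|$, so these terms do \emph{not} gain decay by scaling alone. They are controlled only by smallness: estimating the summed factor by the critical bound $\epsilon/\left|q\right|^2$ and the genuinely high factor $v\left(\xi-q\right)$ by the inductive bound $N_m/\left|\xi-q\right|^{2+m}$ yields a contribution of the form $c\epsilon N_m\left|\xi\right|^{-(2+m)}$, i.e. a term at the \emph{same} decay rate but carrying the small coefficient $c\epsilon$.

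Collecting the two types, the integral equation gives, for $\left|\xi\right|\ge K_{m+1}:=2K_m$,
\[
\left|\xi\right|^{2+(m+1)}\left|v^k\left(\xi,t\right)\right|\le C\left(M,k_{-1},N_m\right)+c\epsilon\,\widetilde N_{m+1},
\]
where $\widetilde N_{m+1}$ denotes the supremum over the high-frequency range of the very quantity on the left. Choosing $\epsilon$ once and for all so small that $c\epsilon<\tfrac12$, the small self-interaction term is \emph{absorbed} into the left-hand side, producing a finite $N_{m+1}\le 2C$. Since an infinite quantity cannot be absorbed, I would make this absorption rigorous by carrying it out along the approximating sequence $v_n$ rather than on $v$ directly: assuming inductively (in $n$) the uniform bound $\left|\xi\right|^{2+m}\left|v_n^k\right|\le N_m$, the recursion for $v_{n+1}$ together with $c\epsilon<\tfrac12$ reproduces the same bound with the constant $N_m=C/(1-c\epsilon)$, the induction on $n$ closes because this constant is chosen independent of $n$, and the uniform bound passes to the limit $v$. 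The only points requiring care are the propagation of the high-frequency smallness to the whole sequence $v_n$ and the dependence of the constants on $t$ near $0$; neither affects the conclusion that $v\left(\cdot,t\right)$ is smooth for every $t\in\left(0,T\right)$.
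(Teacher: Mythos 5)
Your overall strategy (rapid Fourier decay implies smoothness, proved by an induction that upgrades the decay exponent using the Duhamel formula and a splitting of the convolution sum into low--high and high--high interactions) is the right one, and you correctly identify the critical high--high interaction as the obstruction. But the way you propose to cross that obstruction contains a genuine gap: the absorption constant is not uniform in the inductive stage $m$. When $k_{-1}\leq\left|q\right|\leq\left|\xi\right|/2$ you must convert the inductive bound $N_{m+1}\left|\xi-q\right|^{-(3+m)}$ into a bound in terms of $\left|\xi\right|$, and since the sum $\sum\epsilon\left|q\right|^{-2}$ is dominated by $\left|q\right|\sim\left|\xi\right|/2$, where $\left|\xi-q\right|$ is only $\geq\left|\xi\right|/2$, this costs a factor $2^{3+m}$. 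The self-interaction term is therefore $c\,2^{3+m}\epsilon\,\widetilde N_{m+1}\left|\xi\right|^{-(3+m)}$ with $c$ universal, and the absorption condition $c\,2^{3+m}\epsilon<\tfrac12$ cannot be met for all $m$ by an $\epsilon$ ``chosen once and for all.'' (If instead you estimate that region using the critical bound $\epsilon\left|\xi-q\right|^{-2}$ on the second factor, you get $c\epsilon^{2}\left|\xi\right|^{-2}$ after the heat gain, which does not decay at the target rate at all.) This is not a bookkeeping issue but the criticality of $\Phi\left(2\right)$: smallness alone buys only a \emph{fixed} subcritical gain, not one full power per step. A secondary gap is that you run the absorption along the approximating sequence $v_{n}$, whereas the theorem is stated for an arbitrary solution $v\in L^{\infty}\left(0,T;\Phi\left(2\right)\right)$ of the integral equation, which need not be a limit of that iteration; the a priori quantity $\sup\left|\xi\right|^{3+m}\left|v\right|$ is not known to be finite, so absorbing it on $v$ directly is circular.

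The paper avoids both problems with a two-stage argument. First, it does not try to gain powers of $\left|\xi\right|$ at all in the critical regime: Lemma \ref{regularity1} and the time-stepping $\tau_{m}=\rho-\rho/2^{m}$ show only that the \emph{constant} improves, $\left|v^{k}\left(\xi,t\right)\right|\leq\epsilon^{\mu_{m}}\left|\xi\right|^{-2}$ for $\left|\xi\right|\geq k_{m}$, where $\mu_{m}\sim2^{m-1}$ and the thresholds $k_{m}=k_{0}\epsilon^{-2^{m}}$ grow so fast that the relation between $\epsilon^{\mu_{m}}$ and $k_{m}$ converts the shrinking constants into a genuine but modest polynomial gain, $\left|v^{k}\right|\leq D\left|\xi\right|^{-2-1/4}$; every quantity in this bootstrap is finite a priori from the $L^{\infty}\Phi\left(2\right)$ bound, so no absorption is needed and the argument applies to any solution. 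Second, once any subcritical decay $\left|\xi\right|^{-2-\eta}$ is available, the final lemma iterates it without any smallness, because the sum $\sum\left|q\right|^{-2-\eta}$ over $\left|q\right|\lesssim\left|\xi\right|$ grows strictly slower than $\left|\xi\right|$ and each pass improves $\eta$ by a definite amount. To repair your argument you would need to make essentially the same move: use smallness (or your absorption, at the single stage where its constant is controlled) only to break criticality once, and then switch to a subcriticality-driven iteration for all higher exponents.
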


To prove Theorem \ref{regularity} we will need to estimate term
\[
\sum M_{ijk}\left(\xi\right)u^i\left(q\right)u^j\left(\xi-q\right).
\]
This is the content of Lemma \ref{regularity1}. 
But before we state and prove Lemma \ref{regularity1} and in order to express
our estimates in a convenient way we will define to sequences of numbers.
Namely
\[
\left\{
\begin{array}{l}
\mu_0=1\quad \mu_1=1\\
\mu_{n+1}=2\mu_n-1,\quad n\geq 2 
\end{array}
\right.
\]
and 
\[
k_n=\frac{1}{\epsilon^{2^n}}k_0
\]
where $k_0$ is such that 
\[
\frac{k_{-1}}{k_0}\cdot D < \min \left\{\epsilon, \frac{1}{2}\right\}
\]
and $D= \sup_{\left(0,T\right)}\left\|u\left(t\right)\right\|$.

We are now ready to estate and prove,
\begin{lemma}
\label{regularity1}
Assume  that for all $\xi$ such that $\left|\xi\right|\geq k_{-1}$ 
\[
\left|v^k\left(\xi,s\right)\right|\leq \frac{\epsilon}{\left|\xi\right|^2}
\]
and if $\left|\xi\right|\geq k_m$
\[
\left|v^k\left(\xi,s\right)\right|\leq \frac{\epsilon^{\mu_m}}{\left|\xi\right|^2}
\]
Then for $\left|\xi\right|\geq k_{m+1}$ it holds that,
\[
\left|\sum_{q\in\mathbb{Z}^3}M_{ijk}\left(\xi\right)v^i\left(q,s\right)v^j\left(\xi-q,s\right)\right|
\leq \epsilon^{\mu_{m+1}}.
\]
\end{lemma}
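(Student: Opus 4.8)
The plan is to decompose the convolution sum $\sum_{q}M_{ijk}(\xi)v^i(q,s)v^j(\xi-q,s)$ into regions according to the relative sizes of $|q|$, $|\xi-q|$ and $|\xi|$, exactly as in the proof of Lemma 1, but now carefully tracking \emph{which} decay estimate applies in each region. The point is that for $|\xi| \geq k_{m+1}$ we want to extract the improved exponent $\epsilon^{\mu_{m+1}}$, and this improvement comes from the fact that when both frequencies $q$ and $\xi-q$ are comparable to (or larger than) $|\xi| \geq k_{m+1} \geq k_m$, both factors enjoy the stronger bound $\epsilon^{\mu_m}/|\cdot|^2$, producing $\epsilon^{2\mu_m}=\epsilon^{\mu_{m+1}}$ by the defining recursion $\mu_{m+1}=2\mu_m-1$ (after absorbing one factor of $\epsilon$ into the summation constant).

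First I would split the sum as $I+II+III$ using the same three regions as in Lemma 1: the region $R_I$ where $1\leq|q|\leq 2|\xi|$ and $|\xi-q|\leq |\xi|/2$ (which forces $|q|\geq |\xi|/2$), the region $R_{II}$ where $1\leq|q|\leq 2|\xi|$ and $|\xi-q|>|\xi|/2$, and the region $R_{III}$ where $|q|>2|\xi|$ (which forces $|\xi-q|\geq |q|/2$). In each region one of the two frequencies is automatically $\geq |\xi|/2 \geq k_{m+1}/2$. The delicate part is the \emph{other} frequency, which in regions $R_I$ and $R_{II}$ can be small. Here I would use the choice of $k_0$: since $|\xi|\geq k_{m+1} \geq k_0$ and the partner frequency, when small, still satisfies $|q|\geq k_{-1}$ or falls into the regime controlled by the hypothesis $|v^k(\xi,s)|\leq \epsilon/|\xi|^2$ for $|\xi|\geq k_{-1}$. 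The condition $(k_{-1}/k_0)\cdot D < \min\{\epsilon,1/2\}$ is precisely what guarantees that any leftover contribution from frequencies below $k_{-1}$ (where we only control $\|v\|$ through $D$) is dominated by $\epsilon$, so the worst factor we ever pay is $\epsilon$ rather than $D$.

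Concretely, for region $R_{III}$ both $|q|>2|\xi|\geq k_m$ and $|\xi-q|\geq |q|/2 > |\xi| \geq k_m$, so both factors carry the strong bound $\epsilon^{\mu_m}/|\cdot|^2$; combining $|M_{ijk}(\xi)|\leq c|\xi|$ with inequality (\ref{basicineq}) gives
\[
|III| \leq c|\xi|\,\epsilon^{2\mu_m}\sum_{|q|>2|\xi|}\frac{1}{|q|^4} \leq c\,\epsilon^{2\mu_m} = c\,\epsilon\cdot\epsilon^{\mu_{m+1}} \leq \epsilon^{\mu_{m+1}}
\]
for $\epsilon$ small. For $R_I$ and $R_{II}$ the large frequency ($|q|\geq|\xi|/2$ in $R_I$, $|\xi-q|>|\xi|/2$ in $R_{II}$) contributes $\epsilon^{\mu_m}$, while the small frequency is summed using (\ref{basicineq0}); here the analysis branches on whether the small frequency exceeds $k_m$, lies between $k_{-1}$ and $k_m$, or lies below $k_{-1}$, and in each case the factor $(k_{-1}/k_0)D<\epsilon$ ensures the total is at most $c\,\epsilon\cdot\epsilon^{\mu_m}=c\,\epsilon^{\mu_{m+1}}$.

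The main obstacle I anticipate is bookkeeping the small-frequency factor in regions $R_I$ and $R_{II}$: one must verify that even in the worst case—where the partner frequency is small and only the crude bound $D$ is available—the ratio $k_{-1}/k_0$ converts that $D$ into a factor bounded by $\epsilon$, so that multiplying by the large-frequency factor $\epsilon^{\mu_m}$ still yields $\epsilon^{\mu_m+1}=\epsilon^{\mu_{m+1}}$. This is exactly where the careful definition of $k_0$ is used, and it is the step that elevates the estimate from the uniform bound of Lemma 1 to the improving bound needed to bootstrap regularity. Summing the three contributions and choosing the universal constant small relative to $1$ completes the proof.
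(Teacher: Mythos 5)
Your decomposition and overall strategy match the paper's: split the convolution according to the relative sizes of $\left|q\right|$, $\left|\xi-q\right|$, $\left|\xi\right|$ and according to which of the thresholds $k_{-1}$, $k_m$ the small frequency falls under; use $\left|M_{ijk}\left(\xi\right)\right|\leq c\left|\xi\right|$ together with (\ref{basicineq0}) and (\ref{basicineq}); and absorb the universal constant into one power of $\epsilon$ at the end. The region where both frequencies exceed $k_m$ is handled correctly and identically to the paper.

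The genuine gap is in the accounting for the regions where the partner frequency is small. You claim that there the total is $c\,\epsilon\cdot\epsilon^{\mu_m}=c\,\epsilon^{\mu_{m+1}}$, but $\epsilon^{\mu_m+1}$ equals $\epsilon^{\mu_{m+1}}=\epsilon^{2\mu_m-1}$ only when $\mu_m=2$, and $\epsilon^{\mu_m+1}\leq\epsilon^{2\mu_m-1}$ fails as soon as $\mu_m>2$. Gaining a single extra power of $\epsilon$ per step cannot produce the doubling $\mu_m\mapsto 2\mu_m-1$ that the lemma asserts and that the bootstrap in Theorem \ref{regularity} requires. The paper closes exactly this gap by exploiting the super-exponential growth of the cutoffs $k_m=k_0\epsilon^{-2^m}$: for the range $k_{-1}\leq\left|q\right|<k_m$ one uses $\left|\xi\right|\geq k_{m+1}$ to write $\frac{k_m}{\left|\xi\right|}\leq\frac{k_m}{k_{m+1}}=\epsilon^{2^m}\leq\epsilon^{\mu_m}$, and for the range $\left|q\right|<k_{-1}$ one uses $\frac{Dk_{-1}}{k_m}=\epsilon^{2^m}\cdot\frac{Dk_{-1}}{k_0}\leq\epsilon^{2^m+1}\leq\epsilon^{\mu_m}$. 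In other words, the smallness condition $\frac{k_{-1}}{k_0}D<\epsilon$ alone is not what rescues these regions; it is the ratio of consecutive cutoffs that supplies a full factor $\epsilon^{\mu_m}$ (not merely $\epsilon$), so that every region contributes $c\,\epsilon^{2\mu_m}/\left|\xi\right|$ before multiplying by $\left|M_{ijk}\right|$. You should replace the step ``$c\,\epsilon\cdot\epsilon^{\mu_m}=c\,\epsilon^{\mu_{m+1}}$'' with these ratio estimates; the rest of your argument then goes through.
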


\begin{proof}
First recall that $\left|M_{ijk}\left(\xi\right)\right|\leq c\left|\xi\right|$.
\begin{eqnarray}
\label{firstineq}
I&\leq& 
\left|\xi\right|\sum_{1\leq\left|q\right|<k_{-1}}\left|v^i\left(q,s\right)v^j\left(\xi-q,s\right)\right|+
\left|\xi\right|\sum_{k_{-1}\leq\left|q\right|< k_m}\left|v^i\left(q,s\right)v^j\left(\xi-q,s\right)\right|\\ \notag
&&+
\left|\xi\right|\sum_{\left|q\right|\geq  k_m}\left|v^i\left(q,s\right)v^j\left(\xi-q,s\right)\right|
\end{eqnarray}
We estimate the first sum. Observe that $k_{-1}\leq \frac{\left|\xi\right|}{2}$, so if $\left|q\right|<k_{-1}$,
we must have $\left|\xi-q\right|\geq \frac{\left|\xi\right|}{2}$. Hence, using the elementary inequality
(\ref{basicineq0}), we can bound
\begin{eqnarray*}
\sum_{1\leq\left|q\right|<k_{-1}}\left|v^i\left(q,s\right)v^j\left(\xi-q,s\right)\right|
&\leq&
\frac{4\epsilon^{\mu_m}}{\left|\xi\right|^2}\sum_{1\leq\left|q\right|<k_{-1}}\frac{D}{\left|q\right|^2}\\
&\leq& 4c\epsilon^{\mu_m}\frac{k_{-1}}{k_m}\leq 4c\epsilon^{2\mu_m}
\end{eqnarray*}

To estimate the second sum, notice 
that if $\left|\xi\right|\geq k_{m+1}$ and $\left|q\right|\leq k_m$, then 
$\left|\xi-q\right|\geq\frac{\left|\xi\right|}{2}\geq k_m$. All this said,
using inequality (\ref{basicineq0}) again we obtain, 
\begin{eqnarray*}
\sum_{1\leq\left|q\right|< k_m}\left|v^i\left(q,s\right)v^j\left(\xi-q,s\right)\right|
&\leq& 
\frac{4\epsilon^{\mu_m}}{\left|\xi\right|^2}\sum_{1\leq\left|q\right|<k_m}\frac{\epsilon}{\left|q\right|^2}\\
&\leq& \frac{4\epsilon^{\mu_m}}{\left|\xi\right|^2}\epsilon k_m
\end{eqnarray*}
Observe now that $\frac{k_m}{k_{m+1}}\leq \epsilon^{2^m}\leq \epsilon^{\mu_m}$. This yields
the bound,
\[
\sum_{1\leq\left|q\right|< k_m}\left|v^i\left(q,s\right)v^j\left(\xi-q,s\right)\right|
\leq  \frac{4\epsilon^{\mu_m}}{\left|\xi\right|}\frac{k_m}{k_{m+1}}
\leq \frac{4\epsilon^{2\mu_m}}{\left|\xi\right|}
\]
To estimate the second sum on the righthanside of (\ref{firstineq}) we split it into three sums, namely
\begin{eqnarray}
\label{firsteq}
\sum_{\left|q\right|\geq k_m}\left|v^i\left(q,s\right)v^j\left(\xi-q,s\right)\right|
&=&\sum_{k_m\leq \left|q\right|<\frac{\left|\xi\right|}{2}}
\left|v^i\left(q,s\right)v^j\left(\xi-q,s\right)\right|\\ \notag
&&
+\sum_{\frac{\left|\xi\right|}{2}\leq \left|q\right|<2\left|\xi\right|}
\left|v^i\left(q,s\right)v^j\left(\xi-q,s\right)\right|\\ \notag
&&
+
\sum_{\left|q\right|\geq 2\left|\xi\right|}
\left|v^i\left(q,s\right)v^j\left(\xi-q,s\right)\right|
\end{eqnarray}
Estimating the three sums on the right hand side separately. Observe that
if $\left|q\right|\leq \frac{\left|\xi\right|}{2}$ then we must have 
$\left|\xi-q\right|\geq \frac{\left|\xi\right|}{2}>k_m$. Therefore, using inequality
(\ref{basicineq0}), we get

\begin{eqnarray*}
\sum_{k_m\leq\left|q\right|<\frac{\left|\xi\right|}{2}}
\left|v^i\left(q,s\right)v^j\left(\xi-q,s\right)\right|&\leq&
\frac{4\epsilon^{2\mu_m}}{\left|\xi\right|^2}\sum_{1\leq\left|q\right|<\frac{\left|\xi\right|}{2}}
\frac{1}{\left|q\right|^2}\\
&\leq& \frac{4\epsilon^{2\mu_m}}{\left|\xi\right|}
\end{eqnarray*}

To estimate the second sum we split it into two sums,
\begin{eqnarray*}
\sum_{\frac{\left|\xi\right|}{2}\leq \left|q\right|<2\left|\xi\right|}
\left|v^i\left(q,s\right)v^j\left(\xi-q,s\right)\right|&=&
\sum_{\frac{\left|\xi\right|}{2}\leq \left|q\right|<2\left|\xi\right|,k_m\leq\left|\xi-q\right|}
\left|v^i\left(q,s\right)v^j\left(\xi-q,s\right)\right|\\
&&+
\sum_{\frac{\left|\xi\right|}{2}\leq \left|q\right|<2\left|\xi\right|,\left|\xi-q\right|<k_m}
\left|v^i\left(q,s\right)v^j\left(\xi-q,s\right)\right|
\end{eqnarray*}

Estimating the first sum on the righthandside of the previous equality,
\begin{eqnarray*}
\sum_{\frac{\left|\xi\right|}{2}\leq \left|q\right|<2\left|\xi\right|,\left|\xi-q\right|\geq k_m}
\left|v^i\left(q,s\right)v^j\left(\xi-q,s\right)\right|
&\leq&
\frac{4\epsilon^{2\mu_m}}{\left|\xi\right|^2}\sum_{1\leq\left|\xi-q\right|<3\left|\xi\right|}
\frac{1}{\left|\xi-q\right|^2}\\
&\leq& \frac{12\epsilon^{2\mu_m}}{\left|\xi\right|}
\end{eqnarray*}
The estimation of the second sum proceeds in exactly
the same way as the estimation of the first sum
on the right hand side of (\ref{firstineq}), and hence we obtain
\begin{equation*}
\sum_{\frac{\left|\xi\right|}{2}\leq \left|q\right|<2\left|\xi\right|,\left|\xi-q\right|<k_m}
\left|v^i\left(q,s\right)v^j\left(\xi-q,s\right)\right|
\leq \frac{4\epsilon^{2\mu_m}}{\left|\xi\right|}.
\end{equation*}

Now we estimate the third sum in the righthandside of (\ref{firsteq}). Using that
$\left|q\right|\geq 2\left|\xi\right|$ implies that $\left|\xi-q\right|\geq \frac{1}{2}\left|q\right|$,
and inequality (\ref{basicineq})
we can bound,
\begin{eqnarray*}
\sum_{\left|q\right|\geq 2\left|\xi\right|}
\left|v^i\left(q,s\right)v^j\left(\xi-q,s\right)\right|&\leq&
4\epsilon^{2\mu_m}\sum_{\left|q\right|\geq 2\left|\xi\right|}\frac{1}{\left|q\right|^4}
\leq \frac{4\epsilon^{2\mu_m}}{\left|\xi\right|}
\end{eqnarray*}

Putting all the previous estimations together, we arrive at
\begin{eqnarray*}
\left|\sum_{q\in\mathbb{Z}^3}M_{ijk}\left(\xi\right)v^i\left(q,s\right)v^j\left(\xi-q,s\right)\right|
&\leq& \left|\xi\right|\left(\frac{28\epsilon^{\mu_m}}{\left|\xi\right|}\right),
\end{eqnarray*}
and if we assume $0<\epsilon<\frac{1}{28}$, the previous inequality reads as
\begin{equation*}
\left|\sum_{q\in\mathbb{Z}^3}M_{ijk}\left(\xi\right)v^i\left(q,s\right)v^j\left(\xi-q,s\right)\right|
\leq \epsilon^{2\mu_m-1}\leq \epsilon^{\mu_{m+1}},
\end{equation*}
and the Lemma is proved.
\end{proof}

\subsection{Proof of Theorem \ref{regularity}} 
Given $0<\rho<T$, we will first show that for a constant $K\left(\rho\right)$, there exists
a constant $D$ such that if $\left|\xi\right|\geq K\left(\rho\right)$ then
\[
\left|v^k\left(\xi, t\right)\right|\leq\frac{D}{\left|\xi\right|^{2+\frac{1}{4}}}\quad
\mbox{if}\quad t>\rho.
\]

Define
\[
\tau_m=\rho-\frac{\rho}{2^m}.
\]
We will show by induction that
\begin{equation}
\tag{P}
\label{property}
v^k\left(\xi,t\right)\leq \frac{\epsilon^{\mu_n}}{\left|\xi\right|^2}
\quad\mbox{if} \quad t>\tau_n\quad\mbox{and}\quad \left|\xi\right|\geq k_n.
\end{equation}
For $n=0$, our choice of $k_0$ guarantees that (\ref{property}) holds. Assume
that (\ref{property}) holds for $n=m$. First observe that $v$ satisfies 
\begin{eqnarray*}
v^k\left(\xi,t\right)&=&
v^k\left(\xi,\tau_n\right)\exp\left(-\left|\xi\right|^2\left(t-\tau_n\right)\right)\\\notag
&&
+\int_{\tau_n}^t\exp\left(-\left|\xi\right|^2\left(t-s\right)\right)\sum_{\mathbf{q}\in\mathbb{Z}^3}
M_{ijk}\left(\xi\right)v^i\left(q,s\right)v^j\left(\xi-q,s\right)\, ds.
\end{eqnarray*}

Using this identity, we bound as follows,
\begin{eqnarray*}
v^k\left(\xi,t\right)&\leq& v^k\left(\xi,\tau_m\right)\exp\left(-\left|\xi\right|^2\left(t-\tau_m\right)\right)+
\int_{\tau_m}^t \exp\left(-\left|\xi\right|^2\left(t-s\right)\right)\epsilon^{2\mu_m}\,ds\\
&\leq& \frac{\epsilon^{\mu_m}}{\left|\xi\right|^2}\exp\left(-k_{m+1}\left(\tau_{m+1}-\tau_m\right)\right)\\
&&+
\frac{\epsilon^{2\mu_m}}{\left|\xi\right|^2}
\left(\exp\left(-\left|\xi\right|^2 \tau_m\right)-\exp\left(-\left|\xi\right|^2 t\right)\right)\\
&\leq& \frac{\epsilon^{\mu_m}}{\left|\xi\right|^2}+\frac{\epsilon^{2\mu_m}}{\left|\xi\right|^2}.
\end{eqnarray*}
From this last bound it follows that if $t\geq \rho>\rho-\frac{\rho}{2^m}$, 
then if $k_m\leq\left|\xi\right|<k_{m+1}$ it holds that
\[
\left|v^k\left(\xi,t\right)\right|\leq \frac{\epsilon^{\mu_m}}{\left|\xi\right|^2}.
\]
Since $\mu_m\geq 2^{n-1}$ and $k_m=\frac{k_0}{\epsilon^{2^m}}$, 
it is easy to check that $\epsilon^{\mu_m}\leq \frac{k_0^{\frac{1}{4}}}{\left|\xi\right|^{\frac{1}{4}}}$.
Hence for all $t\geq \rho$ the following estimate holds,
\[
\left|v^k\left(\xi,t\right)\right|\leq \frac{D}{\left|\xi\right|^{2+\frac{1}{4}}}.
\]

The following Lemma will then finish the proof of Theorem \ref{regularity}.
\begin{lemma}
Let $v$ be a solution to (FNS) such that for all $t\in \left(0,T\right)$ satisfies
\[
\left|v^k\left(\xi,t\right)\right|\leq \frac{D}{\left|\xi\right|^{2+\eta}}
\]
with $D$ and $\eta>0$ independent of $t$. Then $v$ is smooth.
\end{lemma}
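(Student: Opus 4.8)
The plan is to upgrade the polynomial decay hypothesis $\left|v^k\left(\xi,t\right)\right|\le D\left|\xi\right|^{-\left(2+\eta\right)}$ into \emph{super-polynomial} decay of the Fourier coefficients, since a periodic distribution whose coefficients satisfy $\sup_{\xi}\left|\xi\right|^N\left|v^k\left(\xi,t\right)\right|<\infty$ for every $N$ is automatically $C^\infty$ in the space variable. The engine of the argument is the integral equation together with the two elementary facts used repeatedly above: the heat factor gains two powers of $\left|\xi\right|$, namely $\int_\tau^t \exp\left(-\left|\xi\right|^2\left(t-s\right)\right)\,ds\le \left|\xi\right|^{-2}$, while the homogeneous contribution $v^k\left(\xi,\tau\right)\exp\left(-\left|\xi\right|^2\left(t-\tau\right)\right)$ is super-polynomially small as soon as $t-\tau$ is bounded below. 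Thus each pass through the equation trades the current decay of the quadratic term for two extra powers of $\left|\xi\right|$.

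First I would record the bilinear estimate: if $\left|v^k\left(\xi,t\right)\right|\le A\left|\xi\right|^{-\alpha}$ for $\left|\xi\right|\ge 1$ with $\alpha>2$, then
\[
\left|\sum_{q\in\mathbb{Z}^3} M_{ijk}\left(\xi\right)v^i\left(q,s\right)v^j\left(\xi-q,s\right)\right|\le CA^2\left|\xi\right|^{-\gamma\left(\alpha\right)},\qquad \gamma\left(\alpha\right)=\min\left\{2\alpha-4,\ \alpha-1\right\}.
\]
This is proved by the same three-region decomposition $\left|q\right|<\left|\xi\right|/2$, $\left|\xi\right|/2\le\left|q\right|<2\left|\xi\right|$, $\left|q\right|\ge 2\left|\xi\right|$ used in Lemma 1 and in Lemma \ref{regularity1}, invoking $\left|M_{ijk}\left(\xi\right)\right|\le c\left|\xi\right|$ and the elementary sums (\ref{basicineq0}) and (\ref{basicineq}); the only new point is the bookkeeping of the exponent across the threshold $\alpha=3$, where $\sum_{1\le\left|q\right|<R}\left|q\right|^{-\alpha}$ changes from growing like $R^{3-\alpha}$ to being bounded.

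Next I would set up the bootstrap exactly as in the proof of Theorem \ref{regularity}, replacing the numbers $\mu_n$ by the exponents generated by $\gamma$. Feeding the bilinear estimate into the shifted integral equation and using the two facts above gives the recursion $\alpha_{n+1}=\gamma\left(\alpha_n\right)+2=\min\left\{2\alpha_n-2,\ \alpha_n+1\right\}$ with $\alpha_0=2+\eta$. Since $\alpha_0>2$ we have $\alpha_{n+1}>\alpha_n$, and once $\alpha_n\ge 3$ the sequence increases by exactly $1$ at each step, so $\alpha_n\to\infty$; each step costs only a finite worsening of the constant $A$, so for every $N$ a finite number of iterations yields $\sup_{\xi}\left|\xi\right|^N\left|v^k\left(\xi,t\right)\right|<\infty$. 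To keep the homogeneous term harmless I would run the induction over base times $\sigma_n\uparrow\rho$ (the $\tau_n$ device of Theorem \ref{regularity}), proving the $n$-th decay bound for $t>\sigma_n$; fixing any $t\in\left(0,T\right)$ and any $\rho<t$, all of these bounds hold at $t$, which gives the spatial smoothness of $v\left(\cdot,t\right)$ for every $t\in\left(0,T\right)$.

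Finally, smoothness in time (hence joint smoothness) follows by differentiating the relation $\partial_t v^k\left(\xi,t\right)=-\left|\xi\right|^2 v^k\left(\xi,t\right)+\sum_{q} M_{ijk}\left(\xi\right)v^i\left(q,t\right)v^j\left(\xi-q,t\right)$: once $v\left(\cdot,t\right)$ is rapidly decreasing and locally bounded in $t$, the right-hand side is rapidly decreasing, so $\partial_t v$ is too, and the procedure iterates. The main obstacle is the bilinear estimate in the borderline regime $2<\alpha<3$ — there neither factor of the convolution is summable on its own, so the gain comes entirely from the geometric separation of the three regions — together with arranging the time-uniformity so that the exponentially small homogeneous term does not reintroduce the slow $\left|\xi\right|^{-2}$ decay as $t\downarrow 0$.
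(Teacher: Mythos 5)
Your proposal is correct and follows the same overall strategy as the paper's proof: bootstrap the decay exponent of the Fourier coefficients through the integral equation, combining a bilinear convolution estimate with the two powers of $\left|\xi\right|$ gained from $\int\exp\left(-\left|\xi\right|^2\left(t-s\right)\right)ds$ and a sequence of base times increasing to $\rho$ to neutralize the homogeneous term. Where you differ is in the key bilinear lemma. The paper keeps the exponent $2+\eta$ fixed and uses an eight-region decomposition, splitting the low-frequency range at the intermediate scale $\sqrt{\left|\xi\right|}$ and trading the counting bound $\sum_{\left|q\right|<r}1\leq cr^3$ against the pointwise bound $\left|q\right|^{-\left(2+\eta\right)}\leq\left|\xi\right|^{-\left(1+\eta/2\right)}$ on the middle range; this produces the per-step gain $\min\left(\tfrac{1}{2}+\eta,\tfrac{3}{2}\eta\right)$ (the lemma as stated records the weaker $\min\left(\tfrac{1}{2},\tfrac{3}{2}\eta\right)$, which taken literally would stall the iteration at exponent $\tfrac{5}{2}$, so your bookkeeping is actually the safer one). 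You instead reuse the three-region split of Lemma 1 with a general exponent $\alpha$ and sum $\left|q\right|^{-\alpha}$ directly, obtaining $\gamma\left(\alpha\right)=\min\left(2\alpha-4,\alpha-1\right)$ and the transparent recursion $\alpha_{n+1}=\min\left(2\alpha_n-2,\alpha_n+1\right)$, which visibly diverges; you also make explicit the passage from rapid spatial decay to joint smoothness in $t$ via the differentiated equation, a point the paper leaves implicit. Two small things to tidy in your version: at the threshold $\alpha=3$ the sum $\sum_{1\leq\left|q\right|<R}\left|q\right|^{-3}$ carries a logarithm, so you should either perturb $\alpha$ slightly below $3$ before that step or absorb the logarithm into a small loss of exponent; and you should state, as the paper does, that the improved bound is only claimed for $\left|\xi\right|\geq K\left(\rho\right)$, with the finitely many remaining modes controlled by the $\Phi\left(2\right)$ bound.
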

\begin{proof}
Let $\rho>0$. Under the hypothesis of the Lemma,
we will show that there exists a constant $K:=K\left(\rho\right)$ such that
if $t>T$ and $\left|\xi\right|>K$, then for a constant $E$ independent of time,
\[
\left|v^k\left(\xi,t\right)\right|\leq \frac{E}{\left|\xi\right|^{2+\min\left(\frac{1}{2},\frac{3}{2}\eta\right)}}.
\]
Since $\rho>0$ is arbitrary, a finite number of applications of the previous claim shows
that for any $\rho>0$, the Fourier transform of $v$ decays faster than any polynomial, and
this shows the lemma. 

First, we will estimate the term
\[
S=\sum_{q\in \mathbb{Z}^3} M_{ijk}\left(\xi\right)v^i\left(\xi,s\right)v^j\left(\xi,s\right)
\]
under the hypotesis of the lemma. In order to do this we write,
\[
S=I_a+I_b+II_a+II_b+III_a+III_b+IV_a+IV_b
\]
where
\begin{equation*}
I_a=\sum_{1\leq\left|q\right|\leq\sqrt{\left|\xi\right|}}M_{ijk}\left(\xi\right)v^i\left(\xi,s\right)v^j\left(\xi,s\right),
\end{equation*}
\begin{equation*}
II_a=\sum_{\sqrt{\left|\xi\right|}<q\leq \frac{\left|\xi\right|}{2}}
M_{ijk}\left(\xi\right)v^i\left(\xi,s\right)v^j\left(\xi,s\right),
\end{equation*}
\begin{equation*}
III_a=\sum_{\left|q\right|\geq\frac{\left|\xi\right|}{2}, 1\leq\left|\xi-q\right|< 2\left|\xi\right|}
M_{ijk}\left(\xi\right)v^i\left(\xi,s\right)v^j\left(\xi,s\right),
\end{equation*}
and
\begin{equation*}
IV_a=\sum_{\left|q\right|\geq\frac{\left|\xi\right|}{2}, \left|\xi-q\right|\geq 2\left|\xi\right|}
M_{ijk}\left(\xi\right)v^i\left(\xi,s\right)v^j\left(\xi,s\right)
\end{equation*}
The corresponding $I_b,II_b,III_b$ and $IV_b$ are the same as
their $a$ counterparts, except that the role of $q$ and $\xi-q$ is interchanged.
Noticed that by the triangular inequality not both $q$ and $\xi-q$ can be less than $\frac{\left|\xi\right|}{2}$,
and hence all possible cases are covered.

Since $\left|q\right|<\sqrt{\left|\xi\right|}<\frac{\left|\xi\right|}{2}$, and hence
$\left|\xi-q\right|\geq \frac{\left|\xi\right|}{2}$. Hence we have,
\begin{eqnarray*}
\left|I_a\right|&\leq& \left|\xi\right|\sum_{1\leq\left|q\right|\leq\sqrt{\left|\xi\right|}}
\frac{D}{\left|q\right|^{2+\eta}}\frac{D}{\left|\xi-q\right|^{2+\eta}}\\
&\leq&\left|\xi\right|\frac{2^{2+\eta}D^2}{\left|\xi\right|^{2+\eta}}
\sum_{1\leq\left|q\right|\leq\sqrt{\left|\xi\right|}}
\frac{D}{\left|q\right|^2}\\
&\mbox{and by inequality (\ref{basicineq0})}&\\
&\leq& \left|\xi\right|\frac{2^{2+\eta}D^2}{\left|\xi\right|^{2+\eta}}\sqrt{\left|\xi\right|}=
\frac{2^{2+\eta}D^2}{\left|\xi\right|^{\frac{1}{2}+\eta}}.
\end{eqnarray*}
Estimating $II_a$ and $III_a$ is pretty straightforward, via the inequality
\begin{equation*}
\sum_{1\leq\left|q\right|<r} 1 \leq cr^3.
\end{equation*}
Indeed,
\begin{eqnarray*}
\left|II_a\right|&\leq& \left|\xi\right|\frac{2^{2+\eta}D}{\left|\xi\right|^{2+\eta}}\cdot
\frac{D}{\left(\sqrt{\left|\xi\right|}\right)^{2+\eta}}\left(\sum_{\left|q\right|\leq\frac{\left|\xi\right|}{2}}1\right)\\
&\leq&\frac{2^{2+\eta}D}{\left|\xi\right|^{1+\eta}}\cdot
\frac{D}{\left|\xi\right|^{1+\frac{\eta}{2}}}\left|\xi\right|^3=\frac{2^{2+\eta}D^2}{\left|\xi\right|^{\frac{3}{2}\eta}}.
\end{eqnarray*}
\begin{eqnarray*}
\left|III_a\right|&\leq&\left|\xi\right|
\sum_{\left|q\right|\geq \frac{\left|\xi\right|}{2},\frac{\left|\xi\right|}{2}\leq\left|\xi-q\right|<2\left|\xi\right|}
\frac{D}{\left|q\right|^{2+\eta}}\frac{D}{\left|\xi-q\right|^{2+\eta}}\\
&\leq&\left|\xi\right|\cdot\frac{2^{2+\eta}D^2}{\left|\xi\right|^{4+2\eta}}
\left(\sum_{1\leq\left|\xi-q\right|<2\left|\xi\right|}1\right)\\
&\leq&\frac{2^{2+\eta}}{\left|\xi\right|^{2\eta}}
\end{eqnarray*}
Finally, using that $\left|\xi-q\right|\geq 2\left|\xi\right|$ and $\left|q\right|\geq \frac{\left|\xi\right|}{2}$
imply that $\left|q\right|\geq \frac{2}{3}\left|\xi-q\right|$ and inequality (\ref{basicineq}) we can bound
$IV_a$ as follows,
\begin{eqnarray*}
\left|IV_a\right|&\leq&\left|\xi\right|
\sum_{\left|q\right|\geq\frac{\left|\xi\right|}{2}, \left|\xi-q\right|\geq2\left|\xi\right|}
\frac{D}{\left|q\right|^{2+\eta}}\frac{D}{\left|\xi-q\right|^{2+\eta}}\\
&\leq&
\frac{2^{2\eta}}{\left|\xi\right|^{2\eta}}\left(\frac{3}{2}\right)^{2+\eta}
\sum_{\left|q\right|\geq \frac{\left|\xi\right|}{2}}\frac{D^2}{\left|q\right|^4}\\
&\leq& \left|\xi\right|\frac{1}{\left|\xi\right|^{2\eta}}\frac{D^2}{\left|\xi\right|}=\frac{D^2}{\left|\xi\right|^{2\eta}}.
\end{eqnarray*}
The proof is now complete.
\end{proof}

\end{document}